\numberwithin{equation}{section}
\newtheorem{thmx}{Theorem}
\newtheorem{thm2}{Theorem}
\newtheorem{cor}[thm2]{Corollary}
\newcommand{\co}{\mathcal{O}}
\newcommand{\cm}{\mathcal{M}}
\newcommand{\ch}{\mathcal{H}}
\newcommand{\p}{\mathfrak{p}}
\newcommand{\x}{\bold{x}}
\newcommand{\e}{\textrm{e}}
\newcommand{\Q}{\mathbb{Q}}
\newcommand{\Z}{\mathbb{Z}}
\newtheorem{lemma}{Lemma}
\newtheorem{proposition}[lemma]{Proposition}
\author[K\'alm\'an Gy\H ory]{{K\'ALM\'AN GY\H ORY (Debrecen)}}
\title[Bounds for $S$-unit and decomposable form equations]{Bounds for the solutions of $S$-unit equations and decomposable form equations II.}
\address{K\'{a}lm\'{a}n Gy\H ory\\
	Institute of Mathematics\\ 
	University of Debrecen\\ 
	H-4002 Debrecen, P.O. Box 400\\ 
	Hungary}
\email{gyory@science.unideb.hu}
\keywords{$S$-unit equations, decomposable form equations, Thue-equations, Baker's theory of logarithmic forms}
\subjclass{11D61, 11D57, 11D59, 11J86}
\dedicatory{To the memory of Professor Alan Baker}
\thanks{The author has been supported by the OTKA grants 115479 and 128088.}
\begin{document}

\begin{abstract}
%Abstract here
In this paper we improve upon in terms of  S  the best known effective upper bounds for the solutions of S-unit equations and decomposable form equations.
\end{abstract}

\maketitle

\vspace{0.3cm}
\section{Introduction}\label{sec:1}

The $S$-unit equations
\begin{align}\label{eq:1a}
\alpha x+\beta y=1\ \textrm{in}\ x,y\in\co^\ast_S\tag{1.a}
\end{align}
(and their equivalent homogeneous versions) play a very important role in Diophantine number theory (for results and references, see e.g. the books and survey papers \cite{16}, \cite{28}, \cite{9}, \cite{19}, \cite{29}, \cite{7}, \cite{8}). Here, $\alpha,\beta$ are non-zero elements of a number field $K$, $S$ is a finite set of places on $K$ containing the infinite places, and $\co_S,\co^\ast_S$ denote the ring of $S$-integers and the group of $S$-units in $K$. The first explicit upper bounds for the heights of the solutions of equations \eqref{eq:1a} (or their homogeneous versions) were proved in our papers \cite{11}, \cite{13}, \cite{14} by means of Baker's theory of logarithmic forms. We applied these results systematically to get effective finiteness theorems in quantitative form among others to discriminant equations, power integral bases, arithmetic graphs, irreducible polynomials and decomposable form equations of the shape
\begin{align}
F(x_1,\ldots,x_m)=\delta\ \textrm{in}\ x_1,\ldots,x_m\in\co_S,\tag{2.a}
\end{align}
where $\delta\in\co_S\setminus\{0\}$, and $F(X_1,\ldots,X_m)$ is a decomposable form in $m\ge 2$ variables with coefficients in $\co_S$, whose linear factors over $\overline{K}$ have some connectedness properties; for such applications, see e.g. \cite{11}, \cite{12}, \cite{15}--\cite{18}.

Later, several improvements and further applications have been established. The most important improvements concerning \eqref{eq:1a} and \eqref{eq:2a} were obtained by Bugeaud and Gy\H ory \cite{5}, Bugeaud \cite{4}, Gy\H ory and Yu \cite{23}, and quite recently, for \eqref{eq:1a}, by Le Fourn \cite{25}. Before \cite{25} the best known bounds for the heights of the solutions of \eqref{eq:1a} and \eqref{eq:2a} were due to Gy\H ory and Yu \cite{23}; see Theorems \ref{thm:a} and \ref{thm:b} below for equation \eqref{eq:1a}, and Section \ref{sec:3} for equation \eqref{eq:2a}. Further, using our Proposition \ref{prop:5} below, Theorem \ref{thm:a} was generalized with Evertse \cite{7} to equations of the form
\begin{align}
\alpha x+\beta y=1\ \textrm{in}\ x,y\in\Gamma,\tag{1.b}
\end{align}
where $\Gamma$ denotes an arbitrary finitely generated multiplicative subgroup of $K^\ast$ of positive rank. The proofs of Theorems \ref{thm:a}, \ref{thm:b} and Proposition \ref{prop:5} are based among others on Baker's theory. The generalization concerning equation \eqref{eq:1b} also has several important applications, e.g. in our joint books \cite{7} and \cite{8} with Evertse.

In the upper bounds in Theorems \ref{thm:a} and \ref{thm:b}, the parameters depending on $S$ are $s$, the cardinality of $S$, $t$, the number of prime ideals in $S$, $P_S$ the largest norm of these prime ideals, and $R_S$ the $S$-regulator of $K$. Very recently, Le Fourn \cite{25} has improved Theorem \ref{thm:a}, replacing $P_S$ in Theorem \ref{thm:a} by $P'_S$, the third largest norm of the prime ideals in $S$. He proved his Theorem \ref{thm:c} below by combining the proof of Theorem \ref{thm:a} with his variant, Proposition \ref{prop:4} of Runge's method. This improvement is of particular importance when $P'_S$ is small compared with $P_S$ or $S$ contains at most two prime ideals when, by definition, $P'_S=1$.

In our paper, we prove a similar improvement of Theorem \ref{thm:b}, combining our Proposition \ref{prop:5} with Le Fourn's Proposition \ref{prop:4}. We obtain in Theorem \ref{thm:1} and, as a consequence, in Theorem \ref{thm:3} the best upper bounds to date in terms of $S$ for the solutions of equations \eqref{eq:1a} and \eqref{eq:2a}. Further, in Theorem \ref{thm:2} we generalize Theorem \ref{thm:c} of Le Fourn to the equation \eqref{eq:1b}. Finally, as a special case of our Theorem \ref{thm:3} we present the best bound to date for the solutions of Thue equations over $\co_S$. Our results have further consequences. Some of them will be published in a separate paper.

\section{Bounds for the solutions of $S$-unit equations}\label{sec:2}
As above, let $K$ be an algebraic number field and $S$ a finite set of places on $K$ containing the set $S_\infty$ of infinite places. Denote by $\co_S$ the ring of $S$-integers, and by $\co^\ast_S$ the group of $S$-units in $K$. Let $\alpha,\beta$ be non-zero elements of $K$, and consider the $S$-unit equation
\begin{align}
\alpha x+\beta y =1\ \textrm{in}\  x,y\in\co^\ast_S.\tag{1.a}
\end{align}
For $S=S_\infty$, $\co_S$ and $\co^\ast_S$ are just the ring of integers $\co_K$ and the unit group $\co^\ast_K$ of $K$, and \eqref{eq:1a} is called a unit equation.

To derive bounds for the solutions of \eqref{eq:1a}, we shall need some further notations. Let $d$ denote the degree of $K$, $s$ the cardinality of $S$, $R_S$ the $S$-regulator of $K$ (see Section \ref{sec:4}), $\p_1,\ldots,\p_t$ the prime ideals of $\co_K$ corresponding to the finite places in $S$, and let
$$P_S=\begin{cases}
\max_{1\le i\le t} N(\p_i),\ \textrm{if}\  t\ge 1,\\
1,\ \textrm{if}\  t=0.
\end{cases}$$
We have $s=r+t+1$, where $r$ denotes the unit rank of $K$. For $S=S_\infty$, i.e. for $t=0$, $R_S$ is just $R_K$, the regulator of $K$.

For any algebraic number $\gamma$, we denote by $h(\gamma)$ the absolute logarithmic height of $\gamma$ (cf. Section \ref{sec:4}). By the height we shall always mean the absolute logarithmic height. In \eqref{eq:1a}, let
$$H=\max (h(\alpha),h(\beta),1).$$
We use the notation $\log^\ast a=\max(\log a,1)$ for $a>0$.

Improving several earlier explicit bounds on the solutions of \eqref{eq:1a}, Gy\H ory and Yu \cite{23} proved the following two theorems with slightly smaller values for $c_1(d,s),c_2(d,r)$ and $c_4(d,r,t)$.

\begin{thmx}[Gy\H ory and Yu \cite{23}, Theorem 1]\label{thm:a}
	All solutions $x,y$ of equation \eqref{eq:1a} satisfy
	\begin{align}\label{eq:2.1}
	\max(h(x),h(y))<c_1(d,s) P_S\left(1+\frac{\log^\ast R_S}{\log^\ast P_S}\right)R_S H,
	\end{align}
	where $c_1(d,s)=(16ds)^{2(s+3)}$.
\end{thmx}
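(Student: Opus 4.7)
The plan is to combine Baker's theory of linear forms in logarithms (both archimedean and $\mathfrak{p}$-adic) with sharp upper bounds, expressed in terms of $R_S$, for the heights of a fundamental system of $S$-units. The target is an inequality of the form $B \le A\log B$, where $B$ is a natural ``exponent height'' of the unknown, from which the bound on $\max(h(x),h(y))$ will follow by the standard elementary lemma $B\le A\log B \Rightarrow B\le 2A\log A$.

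First I would fix a fundamental system $\varepsilon_1,\ldots,\varepsilon_{s-1}$ of $\mathcal{O}_S^\ast$ whose individual heights satisfy $\max_i h(\varepsilon_i) \le c(d,s)\, R_S/\log^\ast P_S$ and whose product satisfies $\prod_i h(\varepsilon_i)\le c'(d,s) R_S$ -- estimates of Bugeaud--Gy\H{o}ry type obtained from Minkowski's theorem on the successive minima of the $S$-unit lattice together with Dobrowolski-type lower bounds for heights. Writing any solution as $x=\zeta_x\prod_i\varepsilon_i^{a_i}$ and $y=\zeta_y\prod_i\varepsilon_i^{b_i}$ with roots of unity $\zeta_x,\zeta_y$ and integers $a_i,b_i$, one obtains $\max(h(x),h(y)) \le B\cdot \max_i h(\varepsilon_i)+O(H)$ with $B=\max_i(|a_i|,|b_i|)$, so everything reduces to bounding $B$.

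Next I would pick a place $v_0\in S$ at which $|\alpha x|_{v_0}$ (suitably normalized) is largest. At such a place the identity $1-\alpha x=\beta y$ forces $|1-\alpha x|_{v_0}$ to be very small, which via a well-chosen complex or $\mathfrak{p}$-adic logarithm translates into a linear form $\Lambda$ in $\log\alpha,\log\varepsilon_1,\ldots,\log\varepsilon_{s-1}$ (plus, possibly, $\log\zeta$) with integer coefficients bounded by $B$, satisfying
\[
|\Lambda|_{v_0}\le \exp\!\bigl(-c_1(d,s)\,B/\max_i h(\varepsilon_i)\bigr)
\]
in the archimedean case, or the corresponding $\mathfrak{p}$-adic estimate on $\mathrm{ord}_{\mathfrak{p}}\Lambda$ in the non-archimedean case. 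Applying the sharpest current archimedean Baker bound (Matveev / Baker--W\"ustholz), or Yu's $\mathfrak{p}$-adic analogue when $v_0$ is finite with $N(\mathfrak{p})\le P_S$, produces a lower bound on $|\Lambda|_{v_0}$ polynomial in $H$, $\prod_i h(\varepsilon_i)$, and $\log B$, with an extra factor $N(\mathfrak{p})\le P_S$ in the non-archimedean case.

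Comparing the two bounds on $|\Lambda|_{v_0}$ and inserting the estimates on the $h(\varepsilon_i)$ yields an inequality of the shape
\[
B \;\le\; c_2(d,s)\, P_S\!\left(1+\frac{\log^\ast R_S}{\log^\ast P_S}\right)R_S\,H\,\log B,
\]
and the elementary lemma then gives the stated bound, with the polynomial-in-$s$ losses absorbed into the constant $c_1(d,s)=(16ds)^{2(s+3)}$. The main obstacle is the careful bookkeeping of constants: one must use the sharpest available forms of both Baker's and Yu's theorems, keep track of how the doubly-exponential-looking dependence on $s$ enters the height estimates of the $\varepsilon_i$ and the bound on $\Lambda$ so that the two contributions multiply to a single $(16ds)^{2(s+3)}$, and verify the degenerate case $t=0$ (where $P_S=1$ by convention and only the archimedean Baker estimate is invoked) separately to confirm the same formula.
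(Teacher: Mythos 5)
The paper does not actually prove Theorem \ref{thm:a}: it is quoted from Gy\H ory and Yu \cite{23}, and the only in-paper information about its proof is the remark that it rests on Matveev's and Yu's estimates for logarithmic forms together with Lemma \ref{lem:2}. Your outline is essentially that argument: express $x,y$ in a fundamental system of $S$-units whose height product is bounded by a constant times $R_S$ (exactly Lemma \ref{lem:2}, which, as the paper itself notes, is the source of the $s^{2s}$ inside $c_1(d,s)$), extract a very small linear form in logarithms at an extremal place, apply Matveev in the archimedean case and Yu in the $\mathfrak p$-adic case (whence the factor $P_S$), and close with $B\le A\log B$. One step of your sketch is stated backwards, however: if $|\alpha x|_{v_0}$ is \emph{largest}, then $|1-\alpha x|_{v_0}=|\beta y|_{v_0}$ is comparable to $|\alpha x|_{v_0}$ and hence large, not small. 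What one actually does is choose the place $v_0$ and the element $P\in\{\alpha x,\ \beta y,\ \beta y/\alpha x\}$ for which $|P|_{v_0}$ is smallest, i.e.\ $h_{v_0}(P)$ is largest (compare Proposition \ref{prop:4}); with your choice of $v_0$ the quantity that becomes exponentially small is $|1+\beta y/\alpha x|_{v_0}=1/|\alpha x|_{v_0}$, not $|1-\alpha x|_{v_0}$. This is easily repaired and does not affect the architecture; also, the individual bound $\max_i h(\varepsilon_i)\le c(d,s)R_S/\log^\ast P_S$ you invoke is stronger than what Lemma \ref{lem:2} provides and is not needed --- the product bound suffices.
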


We note that for $S=S_\infty$, the bound in \eqref{eq:2.1} can be replaced by
$$c_2(d,r)R_K(\log^\ast R_K)H$$
with $c_2(d,r)=(4d)^{4(r+4)}$.

The next theorem gives a better bound for the solutions in terms of $S$. Denote by $h_K$ the class number of $K$, and put
$$\mathcal{R}=\max(h_K,c_3 dR_K),$$
where $c_3=0$, $1/d$ or $29er!r\sqrt{r-1}\log d$, according as $r=0$, $1$ or $\ge 2$.

\begin{thmx}[Gy\H ory and Yu \cite{23}, Theorem 2; see also Gy\H ory \cite{20}, Theorem A]\label{thm:b}
	Let $t>0$. All solutions $x,y$ of equations \eqref{eq:1a} satisfy
	\begin{align}\label{eq:2.2}
	\max(h(x),h(y))<c_4(d,r,t)\mathcal{R}^{t+5}\frac{P_S}{\log P_S}R_SH,	
	\end{align}
	where $c_4(d,r,t)=16^{3r+4t+12}d^{5r+t+20}$.
\end{thmx}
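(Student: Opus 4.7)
The plan is to follow the Baker--Yu strategy underlying Theorem \ref{thm:a}, but to introduce a refined multiplicative basis of $\co^\ast_S$ whose generators all have heights controlled by $\mathcal{R}$ rather than by the full $S$-regulator $R_S$. This swap is the source of the $\mathcal{R}^{t+5}$ factor, while the sharper $P_S/\log P_S$ dependence (compared with $P_S(1+\log^\ast R_S/\log^\ast P_S)$ in Theorem \ref{thm:a}) will be won by invoking Yu's $p$-adic theorem at a single prime only, the one corresponding to the extremal place of the solution.

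First I would fix a system of fundamental units $\varepsilon_1,\dots,\varepsilon_r$ of $K$ with $h(\varepsilon_j)\ll R_K/d$, via a Minkowski/Bugeaud--Győry type selection. Since $h_K$ annihilates the class group, each ideal $\p_i^{h_K}$ is principal, so I pick a generator $\pi_i\in\co_K$ of $\p_i^{h_K}$; after reducing modulo units one can arrange $h(\pi_i)\ll\mathcal{R}/d$. The definition $\mathcal{R}=\max(h_K,c_3 dR_K)$ is tailored exactly to this purpose: $h_K$ controls the power needed to make $\p_i$ principal, while $R_K$ controls the size of the generator after reduction by units. Every $S$-unit $u$ then admits a representation $u^{h_K}=\zeta\,\varepsilon_1^{b_1}\cdots\varepsilon_r^{b_r}\,\pi_1^{a_1}\cdots\pi_t^{a_t}$ with $\zeta$ a root of unity and integer exponents uniquely determined by $u$.

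Given a solution $(x,y)$, I pick the place $v$ at which one of $|\alpha x|_v,|\beta y|_v$ is largest. From $\alpha x+\beta y=1$ this forces $\beta y/(\alpha x)$ to be $v$-adically close to $-1$, which, after inserting the power-product representations of $x$ and $y$, becomes a smallness statement about a linear form $\Lambda$ in $r+t+2$ logarithms with integer coefficients bounded by some $B\ll h_K\max(h(x),h(y),1)$. If $v\mid\infty$, I would apply Baker's sharpest bound on linear forms in complex logarithms; if $v=\p_i$, I would apply Yu's $p$-adic analogue, which contributes an additional factor $N(\p_i)/\log N(\p_i)\le P_S/\log P_S$ from its sharp residue-characteristic dependence. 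In either case the resulting lower bound on $|\Lambda|_v$ is essentially $\exp(-C\log B)$ with $C\ll (\textrm{const})^{r+t}\prod_j h(\varepsilon_j)\prod_i h(\pi_i)\,h(\alpha)h(\beta)$, and comparison with the trivial upper bound on $|\Lambda|_v$ yields a linear inequality for $\max(h(x),h(y))$.

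The main obstacle is the precise exponent $t+5$. One must carefully convert the raw product $\prod h(\varepsilon_j)\prod h(\pi_i)$ into the desired $\mathcal{R}^{t+5}R_S$: the $r$ unit-height factors are absorbed into one copy of $R_S$ via a lower estimate of the form $R_S\gg R_K\prod_i\log N(\p_i)$, while the surviving $t$ factors plus a bounded number of further ones (absorbing $B$ and the universal constants from Baker--Yu) give exponent $t+5$. Equally delicate is the requirement that Yu's $p$-adic theorem be applied at only one $\p_i$, so that only one power of $P_S$ enters; this demands a separate reduction controlling the remaining $\p_j$-valuations of $x,y$ by trivial estimates. Tight bookkeeping of these two steps, together with the precise shape of the Baker--Yu bound and of the $S$-regulator inequality, is the principal technical difficulty.
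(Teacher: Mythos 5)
Your overall strategy is the right one, and it is essentially the strategy of Gy\H ory and Yu \cite{23} that this paper merely cites for Theorem \ref{thm:b} (and then adapts, via Le Fourn's Proposition \ref{prop:4}, in its own proof of Theorem \ref{thm:1}): replace a fundamental system of $S$-units by the system $\varepsilon_1,\ldots,\varepsilon_r,\pi_1,\ldots,\pi_t$ with $(\pi_i)=\p_i^{h_K}$, write $y^{h_K}$ as a power product in these generators, and apply Matveev at an infinite extremal place or Yu at a finite one; the single application of Yu's theorem at the one extremal prime is exactly what produces the lone factor $P_S/\log P_S$.

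There is, however, one concrete slip that breaks your bookkeeping as written. You claim one can arrange $h(\pi_i)\ll\mathcal{R}/d$. This is impossible in general: a generator of $\p_i^{h_K}$ has norm $N(\p_i)^{h_K}$, hence $h(\pi_i)\ge \frac{h_K}{d}\log N(\p_i)$. The correct bound (Lemma \ref{lem:3} with $S=S_\infty$ and $n=h_K$, i.e.\ \eqref{eq:5.4}) is $h(\pi_i)\le\frac{2}{d}\mathcal{R}\log N(\p_i)$, and the resulting factor $\prod_{i=1}^{t}\log N(\p_i)$ is precisely what becomes the $R_S$ in the final bound, via $R_S\ge R_K\prod_i\log N(\p_i)$ (Lemma \ref{lem:1}) together with Friedman's lower bound $R_K>1/5$. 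Your absorption paragraph has this backwards: the unit heights satisfy $\prod_j h(\varepsilon_j)\le d^rR_K\ll\mathcal{R}$ and are absorbed into \emph{one power of $\mathcal{R}$}, not into $R_S$; with your stated bound on $h(\pi_i)$ there would be nothing left to convert into $R_S$ at all. Two smaller points: the extremal-place argument must treat all three quantities $\alpha x$, $\beta y$ and $\beta y/\alpha x$ (the case you describe, $\beta y/\alpha x$ close to $-1$, is only one of them; the others are handled by symmetry and by passing to $1/x,\ y/x$), and the remaining powers of $\mathcal{R}$ in the exponent $t+5$ come from $h_K\le\mathcal{R}$ entering through $\widetilde H\le h_KH$, $h(y^{h_K})\le h_K\max(h(x),h(y))$ and the $\log^\ast$ terms, rather than from the coefficient bound $B$ alone.
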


In terms of $S$, $s^{2s}$ is the dominating factor in the bound in \eqref{eq:2.1} whenever $t> \log P_S$. The appearance of $s^{2s}$ is due to the use of Lemma \ref{lem:2} of the present paper. Observe that in \eqref{eq:2.2} the bound does not contain $s^s$ or $t^t$. Further, in the latter bound there is $1/\log P_S$ instead of $(1+\log^\ast R_S/\log P_s)$ from \eqref{eq:2.1}. These improvements in \eqref{eq:2.2} are important for certain applications, e.g. in Gy\H ory and Yu \cite{23}, Gy\H ory, Pink and Pint\' er \cite{22} and Gy\H ory \cite{20}. In the latter paper a version of the $abc$ \textit{conjecture} over number fields is proved up to a logarithmic function.

The main tool in the proofs of \eqref{eq:2.1} and \eqref{eq:2.2} is Baker's theory of logarithmic forms, more precisely some deep results of Matveev \cite{26} and Yu \cite{31} concerning linear forms in logarithms in the complex and $p$-adic cases.

Quite recently, Le Fourn \cite{25} has proved the following improvement of Theorem \ref{thm:a}. Let
$$P'_S=\begin{cases}
\textrm{the third largest value of } N(\p_i), i=1,\ldots, t,\ \textrm{if}\  t\ge 3,\\
1, \ \textrm{if}\  t\le 2. 
\end{cases}$$
\begin{thmx}[Le Fourn \cite{25}, Theorem 1.4]\label{thm:c}
	Every solution $x,y$ of equation \eqref{eq:1a} satisfies
	\begin{align}\label{eq:2.3}
	\max(h(x),h(y))<c_1 (d,s) P'_S\left(1+\frac{\log^\ast R_S}{\log^\ast P'_S}\right)R_SH,
	\end{align}
	where $c_1(d,s)$ denotes the constant occurring in Theorem \ref{thm:a}.
\end{thmx}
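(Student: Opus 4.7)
My plan is to combine the Baker-theoretic chain of inequalities used by Gy\H ory and Yu in the proof of Theorem \ref{thm:a} with Le Fourn's variant of Runge's method, Proposition \ref{prop:4}. The guiding observation is that in the Gy\H ory--Yu derivation of \eqref{eq:2.1}, the factor $P_S$ enters at exactly one point: Yu's $p$-adic theorem \cite{31} is applied at the distinguished prime $\p^\ast\in\{\p_1,\ldots,\p_t\}$ at which the maximum non-Archimedean valuation of $1-\beta y$ (equivalently, of $\alpha x$) is attained, and the resulting bound is linear in $N(\p^\ast)\le P_S$. Every other ingredient of the proof is insensitive to which prime is singled out, so the task reduces to weakening the contribution at $\p^\ast$ whenever $\p^\ast$ happens to be one of the two primes of $S$ of largest norm.

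Concretely, I would first repeat the Gy\H ory--Yu reduction: expressing $x,y$ in terms of a fixed fundamental system of $S$-units, the equation $\alpha x+\beta y=1$ becomes a linear relation among powers of $S$-units, and taking logarithms produces a linear form in logarithms that is estimated from above by Matveev's theorem \cite{26} in the Archimedean case and by Yu's theorem at $\p^\ast$. This yields an inequality of exactly the shape of \eqref{eq:2.1}, but with $N(\p^\ast)$ in place of $P_S$. If $\p^\ast$ is not one of the two primes of largest norm in $S$, then $N(\p^\ast)\le P'_S$ by the definition of $P'_S$, and \eqref{eq:2.3} follows immediately. Otherwise, I would replace the $p$-adic step at $\p^\ast$ by an appeal to Proposition \ref{prop:4}, which, by an effective Runge-type argument, yields an upper bound for $v_{\p^\ast}(1-\beta y)$ depending only on $R_S$, $H$ and $P'_S$ rather than on $N(\p^\ast)$. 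Inserting this Runge bound in place of Yu's $p$-adic estimate again produces \eqref{eq:2.3}. The degenerate ranges $t\le 2$ (in which $P'_S=1$) are covered by the same case split: since $\p^\ast$ is automatically one of the at most two primes of largest norm, Runge's method alone controls the non-Archimedean contribution.

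The main obstacle I expect is the splicing step itself: one has to verify that Proposition \ref{prop:4} plugs into the Gy\H ory--Yu inequality chain without worsening the dependence on the other parameters, so that the final constant is still $c_1(d,s)=(16ds)^{2(s+3)}$ and the regulator factor still has the form $1+\log^\ast R_S/\log^\ast P'_S$. In other words, Le Fourn's Runge bound must be of the same shape as Yu's $p$-adic bound in the variables $d,s,R_S,H$. Since Proposition \ref{prop:4} is designed precisely for this substitution, the verification should reduce to a direct comparison of the two estimates and is the only non-routine bookkeeping in the proof.
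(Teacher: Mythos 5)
Your overall plan---graft Le Fourn's Proposition \ref{prop:4} onto the Baker-theoretic argument behind Theorem \ref{thm:a}---is the right one (it is exactly how the paper proves Theorems \ref{thm:1} and \ref{thm:2}, the latter specializing, up to the value of the absolute constant, to Theorem \ref{thm:c}), but you have inverted the content of Proposition \ref{prop:4}, and the branch of your case analysis that actually needs it cannot be completed as written. Proposition \ref{prop:4} is not an upper bound for the valuation of $1-\beta y$ at the distinguished prime $\p^\ast$; it is a \emph{lower} bound, valid at some place $v$ lying in $S'$ (i.e.\ away from the two primes of largest norm), for the local height of one of the three quantities $P\in A=\{\alpha x,\beta y,\beta y/\alpha x\}$: it asserts $\frac{d_v}{d}h_v(P)\ge\frac{1}{|S|}\bigl(\max(h(x),h(y))-3H\bigr)$. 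Its role is therefore to replace the pigeonhole step that selects $\p^\ast$ in the first place: instead of working at the place where $\alpha x$ is most divisible (which may well be the largest prime, whence the factor $P_S$ in \eqref{eq:2.1}), one works at the place $v\in S'$ supplied by Proposition \ref{prop:4}, where $N(v)\le\max(2,P'_S)$, and applies Baker's theory (Matveev/Yu, packaged here as Proposition \ref{prop:5}) \emph{at that place} to obtain an upper bound for $h_v(P)$ of the shape $c\,\frac{N(v)}{\log N(v)}R_SH\log^\ast(\cdots)$; comparing the two inequalities yields \eqref{eq:2.3}. In your ``otherwise'' branch you require an upper bound for $h_{\p^\ast}(\alpha x)$ depending only on $P'_S,R_S,H$; no such bound follows from Proposition \ref{prop:4}, and indeed such a bound, combined with your own choice of $\p^\ast$ (at which $h_{\p^\ast}(\alpha x)\gtrsim\frac{1}{s}h(x)$), would already be the assertion of the theorem, so you would be assuming what is to be proved.

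Two further points are omitted and are genuinely needed once the argument is set up correctly. First, the price of Le Fourn's proposition is that $P$ may be any of the three elements of $A$: the cases $P=\beta y$ and $P=\beta y/\alpha x$ must be treated, the former by symmetry and the latter by passing to the transformed equation $\alpha'x'+\beta'y'=1$ with $x'=1/x$, $y'=y/x$, exactly as in the paper's proof of Theorem \ref{thm:1}. Second, to make the Baker step produce the factor $R_S\bigl(1+\log^\ast R_S/\log^\ast P'_S\bigr)$ one must feed Proposition \ref{prop:5} a system of generators of $\co^\ast_S$ with $\theta\le c_{10}R_S$, i.e.\ a fundamental system as in Lemma \ref{lem:2} (this is also the source of the factor $s^{2s}$ in $c_1(d,s)$). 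These steps are routine, but without them neither the constant nor the shape of the bound in \eqref{eq:2.3} comes out.
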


Clearly, for $t>0$, \eqref{eq:2.3} is an improvement of \eqref{eq:2.1}. This improvement is of particular importance if $P'_S$ is small compared with $P_S$, for example if $1\le t\le 2$ and so $P'_S=1$. However, the bound in \eqref{eq:2.3} still contains the factor $s^{2s}$. The proof of Theorem \ref{thm:c}
 combines the proof of Theorem \ref{thm:a} with a new variant of Runge's method due to Le Fourn \cite{25}.
 
 Our first result is the following.
 \begin{thm2}\label{thm:1}
 	Let $t>0$. Every solution $x,y$ of equation \eqref{eq:1a} satisfies
 	\begin{align}\label{eq:2.4}
 	\max(h(x),h(y))<c_5(d,r,s,t)\mathcal{R}^{t+4}\frac{P'_S}{\log^\ast P'_S}\left(1+\frac{\log^\ast\log P_S}{\log^\ast P'_S}\right)R_SH,
 	\end{align}
 	where $c_5(d,r,s,t)=s^5(16\e)^{3r+4t+7}d^{4r+2t+7}$.
 \end{thm2}

In terms of $S$, this gives the best upper bound to date for the solutions of equation \eqref{eq:1a}. It improves upon in terms of $S$ both Theorem \ref{thm:b} and Theorem \ref{thm:c}.

We now compare in more detail Theorem \ref{thm:1} with Theorem \ref{thm:b} and Theorem \ref{thm:c}. The factor $P_S/\log P_S$ in \eqref{eq:2.2} is improved in \eqref{eq:2.4} to
\begin{align}\label{eq:2.5}
\frac{P'_S}{\log^\ast P'_S}\left(1+\frac{\log^\ast\log P_S}{\log^\ast P'_S}\right)
\end{align}
which improvement is particularly significant when $P'_S$ is small compared with $P_S$. If e.g. $P'_S\le\log P_S$ resp. $P'_S=1$, the factor in \eqref{eq:2.5} is at most $2\log P_S/\log P'_S$ resp. $2\log^\ast\log P_S$. Otherwise, if $P'_S>\log P_S$, then the factor in \eqref{eq:2.5} does not exceed $2P'_S/\log P'_S$. Observe that the dependence on $t$ of $c_4(d,r,t)$ is slightly better than that of $c_5(d,r,s,t)$. This is due to the fact that in Gy\H ory and Yu \cite{23} the estimates of Matveev \cite{26} and Yu \cite{31} for linear forms in logarithms are applied separately, and not through the later obtained Proposition \ref{prop:5} involving both the complex and the $p$-adic cases.

In Theorem \ref{thm:c} the factor $s^{2s}$ still occurs, in contrast with Theorem \ref{thm:1}. Further, the factor $P'_S(1+\log^\ast R_S/\log^\ast P'_S)$ in \eqref{eq:2.3} is improved in Theorem \ref{thm:1} to \eqref{eq:2.5}. Indeed, in \eqref{eq:2.4} there is an extra factor $1/\log^\ast P'_S$ and, by \eqref{eq:5.11}, $\log^\ast \log P_S$ is smaller than $\log^\ast R_S +\log 5$.

Let now $\Gamma$ be a finitely generated multiplicative subgroup of $K^\ast$ of positive rank, and consider the generalization
\begin{align}\label{eq:1b}
\alpha x+\beta y =1\ \textrm{in}\  x,y\in\Gamma\tag{1.b}
\end{align}
of equation \eqref{eq:1a}, where $\alpha,\beta$ are non-zero elements of $K$. Let $S$ denote the smallest set of places of $K$ such that $S$ contains all infinite places, and $\Gamma\subseteq \co^\ast_S$, where $\co^\ast_S$ is the group of $S$-units in $K$. In Evertse and Gy\H ory \cite{7} we proved in an effective form that equation \eqref{eq:1b} has only finitely many solutions. More precisely, we showed that there exists an algorithm which, from effectively given $K,\alpha,\beta$ and a system of generators for $\Gamma/\Gamma_{\textrm{tors}}$ and $\Gamma_{\textrm{tors}}$, computes all solutions $x,y$. We recall that $K$ is said to be effectively given if the minimal polynomial over $\Z$ of a primitive element, say $\gamma$, of $K$ over $\Q$ is given. Further, an element $\delta$ of $K$ is said to be given/effectively determinable if
$$\delta =(p_0+p_1\gamma +\cdots + p_{d-1}\gamma^{d-1})/q$$
with rational integers $p_0,\ldots,p_{d-1},q$ with $\gcd(p_0,\ldots,p_{d-1},q)=1$ that are given/can be effectively computed (see e.g. Section 1.10 in Evertse and Gy\H ory \cite{7}).

We shall need the following further parameters. Let again $H=\max(h(\alpha),h(\beta),1)$, let $\{\xi_1,\ldots,\xi_m \}$ be a system of generators for $\Gamma/\Gamma_{\textrm{tors}}$ (not necessarily a basis which is important in certain applications), let
$$\theta:=h(\xi_1)\cdots h(\xi_m),$$
$s=|S|$, $\p_1,\ldots,\p_t$ the prime ideals in $S$, and let $P_S$ and $P'_S$ be as above.

In Theorem 4.1.3 of Evertse and Gy\H ory \cite{7} we derived an explicit upper bound for the heights of the solutions of \eqref{eq:1b}, which depends on $d,s,P_S,m,\theta$ and $H$. The proof is based on our Proposition \ref{prop:5}. 

Combining Proposition \ref{prop:5} with Proposition \ref{prop:4} due to Le Fourn \cite{25}, we prove the following improvement of Theorem 4.1.3 of Evertse and Gy\H ory \cite{7}.

\begin{thm2}\label{thm:2}
	Every solution $x,y$ of equation \eqref{eq:1b} satisfies 
	\begin{align}\label{eq:2.6}
	\max(h(x),h(y))<16c_6 s\frac{P'_S}{\log^\ast P'_S}\theta\max(\log(c_6 sP'_S),\log^\ast \theta) H
	\end{align}
	where
	$$c_6(d,m)=2(m+1)\log^\ast (dm)(\log^\ast d)^2(16\e d)^{3m+5}.$$
\end{thm2}

In Evertse and Gy\H ory \cite{7} this was proved in a weaker form, with $P_S$ in place of $P'_S$.

Theorem \ref{thm:2} can be regarded as a generalization of a slightly weaker version of Theorem \ref{thm:c}. Indeed, in the special case $\Gamma=\co^\ast_S$ Theorem \ref{thm:2} gives Theorem \ref{thm:c}, in $c_1(d,s)$ with an absolute constant larger than $16$, choosing in $\co^\ast_S$ a system of generators $\{\varepsilon_1,\ldots,\varepsilon_{s-1}\}$ as in Lemma \ref{lem:2}. Then the corresponding $\theta$ is at most $c_{10}R_S$ with the constant $c_{10}$ occurring in Lemma \ref{lem:2}.

The proofs of the results presented or mentioned above involve Baker's theory. We note that there are other effective methods which provide explicit bounds for the solutions of equation \eqref{eq:1a}. Bombieri developed such a method in Diophantine approximation; see Bombieri \cite{1}, Bombieri and Cohen \cite{2}, \cite{3} and Bugeaud \cite{4}. Further, Murty and Pasten, and independently von K\"anel, Matschke and Bennett elaborated another such effective method, the so-called modular method; see e.g. Murty and Pasten \cite{27} and von K\"anel \cite{24}. However, apart from some special situations, the bounds in Theorems \ref{thm:a}, \ref{thm:b} and even more in Theorems \ref{thm:c} and \ref{thm:1}, \ref{thm:2} are better in terms of $S$.

\section{Bounds for the solutions of decomposable form equations}\label{sec:3}

Keeping the notation of the preceding section, consider the decomposable form equation
\begin{align}\label{eq:2a}
F(\x)=\delta\ \textrm{in}\ \x=(x_1,\ldots,x_m)\in\co^m_S\ \textrm{with}\ \ell(\x)\ne 0\ \textrm{for}\ \ell\in\mathcal{L},\tag{2.a}
\end{align}
where $\delta\in\co_S\setminus\{0\}$, $F\in\co_S[X_1,\ldots,X_m]$ is a decomposable form of degree $n$ (i.e. $F$ factorizes into linear forms over $\overline{K}$), and $\mathcal{L}$ is a finite set of non-zero linear forms from $\overline{K}[X_1,\ldots,X_m]$. Extending the ground field $K$ if necessary, we may assume that in \eqref{eq:2a} $F$ factorizes into linear forms over $K$. These linear factors are uniquely determined over $K$ up to proportional factors from $K$. Fix such a factorization, and denote by $\mathcal{L}_0$ a maximal subset of pairwise linearly independent linear factors of $F$. To obtain effective finiteness results on equation \eqref{eq:2a}, we make some assumptions on $\mathcal{L}_0$.

We denote by $\mathcal{G}(\mathcal{L}_0)$ the graph with vertex set $\mathcal{L}_0$ in which the edges are the unordered pairs $\{\ell,\ell' \}$, where $\ell,\ell'$ are distinct elements of $\mathcal{L}_0$ with the property that there exists a third linear form $\ell''$ in $\mathcal{L}_0$ that is a $K$-linear combination of $\ell,\ell'$. If $\mathcal{L}_0$ has at least three elements and $\mathcal{G}(\mathcal{L}_0)$ is connected, $F$ is called \textit{triangularly connected}.

When $\mathcal{G}(\mathcal{L}_0)$ is not connected, let $\mathcal{L}_{0_1},\ldots,\mathcal{L}_{0_k}$ denote the vertex sets of the connected components of $\mathcal{G}(\mathcal{L}_0)$. For $k>1$, let $\ch(\mathcal{L}_{0_1},\ldots,\mathcal{L}_{0_k})$ be the graph having vertex set $\{\mathcal{L}_{0_1},\ldots,\mathcal{L}_{0_k}\}$, in which the pair $\{\mathcal{L}_{0_i},\mathcal{L}_{0_j}\}$ is an edge if there exists a non-zero linear form $\ell_{ij}$ which can be expressed simultaneously as a $K$-linear combination of the forms in $\mathcal{L}_{0_i}$ and $\mathcal{L}_{0_j}$. Here $\ell_{ij}$ can be chosen so that the total number of non-zero terms in both linear combinations is minimal. We choose for each edge $\{\mathcal{L}_{0_i},\mathcal{L}_{0_j} \}$ such an $\ell_{ij}$, and we denote by $\mathcal{L}$ the union of these $\ell_{ij}$. Obviously, $\mathcal{G}$, $\ch$ and (ii) below depend only on the system of linear factors of $F$, but not on the choice of $\mathcal{L}_0$.

Under the assumptions that
\begin{enumerate}
	\item[(i)] \textit{the set $\mathcal{L}_0$ has rank $m$,}
	\item[(ii)] \textit{either $k=1$ or $k>1$ and the graph $\ch(\mathcal{L}_{0_1},\ldots,\mathcal{L}_{0_k})$ is connected,}
\end{enumerate}
equation \eqref{eq:2a} can be reduced to a system of $S$-unit equations in two unknowns, and using effective results concerning equation \eqref{eq:1a}, one can give effective upper bounds for the heights of the solutions of \eqref{eq:2a}. Gy\H ory and Yu \cite{23} used their Theorem \ref{thm:b} above to give in terms of $S$ the best known upper bound for the solutions of \eqref{eq:2a}
\begin{align}\label{eq:3.1}
c_7^s(P_S/\log^\ast P_S)(\log^\ast Q_S)R_S,
\end{align}
provided that (i) and (ii) hold. Here $s$ denotes again the cardinality of $S$, $R_S$ is the $S$-regulator of $K$, $P_S$ the maximal norm and $Q_S$ the product of the norms of the prime ideals $\p_1,\ldots,\p_t$ in $S$ if $t>0$, $P_S=Q_S=1$ if $t=0$, i.e. $S=S_\infty$, and $c_7$ is an explicitly given number which depends on $d,r,h_K,R_K,m,n,h(\delta)$ and $H$, an upper bound for the heights of the coefficients of $F$.

Let again $P'_S$ denote the third largest value of $N(\p_i)$, $i=1,\ldots,t$, if $t\ge 3$, and let $P'_S=1$ if $t\le 2$. Theorem \ref{thm:1} enables us to improve upon the bound of Gy\H ory and Yu \cite{23} in the following form.
\begin{thm2}\label{thm:3}
	Let $F\in\co_S[X_1,\ldots,X_m]$ be a decomposable form of degree $n$ that factorizes into linear forms over $K$ and satisfies the conditions (i) and (ii). Suppose that $t>0$. Then for every solution $\x=(x_1,\ldots,x_m)$ of \eqref{eq:2a} with $\ell(\x)\ne 0$ for $\ell\in\mathcal{L}$ if $k>1$
	\begin{align}\label{eq:3.2}
	\max_{1\le i\le m} h(x_i)<c_8^s\frac{P'_S}{\log^\ast P'_S}\left(1+\frac{\log^\ast\log P_S}{\log^\ast P'_S}\right)(\log Q_S)R_S
	\end{align}
	holds, where $c_8$ is an effectively computable positive number which depends only on $d,r,h_K,R_K,m,n,h(\delta)$ and $H$.
\end{thm2}

As was seen above, the factor in \eqref{eq:2.5} is a considerable improvement of $2P_S/\log P_S$. Hence the bound in \eqref{eq:3.2} is much better in terms of $S$ than the bound \eqref{eq:3.1} of Gy\H ogy and Yu \cite{23}.

It is clear that binary forms having at least three pairwise non-proportional linear factors are triangularly connected. Further, as is known (see e.g. \cite{21}, \cite{15}, \cite{18a}), discriminant forms and index forms are also triangularly connected, and a large class of norm forms in $m$ variables satisfies the conditions (i), (ii) with $k>1$ and $\mathcal{L}=\{X_m\}$. Therefore, our Theorem \ref{thm:3} improves upon the bounds in \cite{21}, \cite{15}, \cite{18a} concerning the $S$-integer solutions of norm form, discriminant form and index form equations.

We present a consequence for the Thue equation
\begin{align}\label{eq:2b}
F(x,y)=\delta\ \textrm{in}\ x,y\in\co_S,\tag{2.b}
\end{align}
where $F\in\co_S[X,Y]$ is a binary form of degree $n\ge 3$ which factorizes into linear factors over $K$ and at least three of these factors are pairwise non-proportional. Further, let $\delta\in\co_S\setminus\{0\}$, and $H$ an upper bound for the heights of the coefficients of $F$. Then Theorem \ref{thm:3} with $m=2$, $k=1$ gives immediately the following.
\begin{cor}\label{cor:4}
	Let $t>0$. Under the above assumptions and notation, all solutions $x,y$ of equation \eqref{eq:2b} satisfy
	\begin{align}\label{eq:3.3}
	\max(h(x),h(y))<c_9^s\frac{P'_S}{\log^\ast P'_S}\left(1+\frac{\log^\ast\log P_S}{\log^\ast P'_S}\right)(\log Q_S)R_S,
	\end{align}
	where $c_9$ is an effectively computable positive number depending only on $d,r,h_K,R_K,n,h(\delta)$ and $H$.
\end{cor}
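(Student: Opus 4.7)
The plan is to derive Corollary \ref{cor:4} as a direct specialization of Theorem \ref{thm:3} to the binary case $m=2$. There are two things to check: (a) that the structural hypotheses (i) and (ii) preceding Theorem \ref{thm:3} are automatically fulfilled by any binary form with at least three pairwise non-proportional linear factors over $K$, and (b) that the bound \eqref{eq:3.2} specializes cleanly to \eqref{eq:3.3} once the fixed value $m=2$ is absorbed into the constant.

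For task (a), I would pick three pairwise non-proportional linear forms $\ell_1,\ell_2,\ell_3\in\mathcal{L}_0$ and invoke two elementary facts about the $2$-dimensional $K$-space of linear forms in $X,Y$: any two non-proportional elements already span it (so $\operatorname{rank}\mathcal{L}_0 = 2 = m$, giving (i)), and any three are $K$-linearly dependent (so for any two distinct $\ell,\ell'\in\mathcal{L}_0$ every further $\ell''\in\mathcal{L}_0$ is a $K$-linear combination of $\ell,\ell'$; consequently the graph $\mathcal{G}(\mathcal{L}_0)$ is complete on $\mathcal{L}_0$ and in particular connected). Hence $F$ is triangularly connected and $k=1$. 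Since $k=1$, the auxiliary set $\mathcal{L}$ is not formed in the setup preceding Theorem \ref{thm:3}, and the side condition \textit{``$\ell(\x)\ne 0$ for $\ell\in\mathcal{L}$ if $k>1$''} is vacuous; thus every solution $(x,y)$ of \eqref{eq:2b} is admissible.

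Task (b) is then immediate: applying Theorem \ref{thm:3} with $m=2$, $k=1$ produces precisely the right-hand side of \eqref{eq:3.3}, with the constant $c_8(d,r,h_K,R_K,m,n,h(\delta),H)$ evaluated at $m=2$; renaming it as $c_9(d,r,h_K,R_K,n,h(\delta),H)$ yields the statement of the corollary. Since the argument is a pure specialization of the preceding theorem, there is no genuine obstacle; the only point requiring care is the dimension-counting verification of triangular connectedness together with the observation that the $\mathcal{L}$-nonvanishing condition drops out when $k=1$.
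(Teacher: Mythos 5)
Your proposal is correct and matches the paper's own derivation, which simply states that Corollary \ref{cor:4} follows from Theorem \ref{thm:3} with $m=2$, $k=1$. Your dimension-counting verification of conditions (i) and (ii) just makes explicit what the paper treats as clear (namely that binary forms with at least three pairwise non-proportional linear factors are triangularly connected).
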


In terms of $S$, this gives the best upper bound to date for the solutions of \eqref{eq:2b}. Corollary \ref{cor:4} improves several earlier explicit results, including Corollary 3 of Gy\H ory and Yu \cite{23}.

\section{Auxiliary results}\label{sec:4}

Keeping the notation of the preceding sections, let again $K$ denote an algebraic number field with the parameters $d,R_K,h_K$ and $r$ specified above. Denote by $\cm_K$ the set of places on $K$. For every $v\in \cm_K$ we associate an absolute value $|\ .\ |$ normalized in the usual way: if $v$ is infinite and corresponds to $\sigma:K\longrightarrow\mathbb{C}$, then we put, for $\alpha\in K$, $|\alpha|_v=|\sigma(\alpha)|$ or $|\sigma{\alpha}|^2$ according as $\sigma(K)$ is contained in $\mathbb{R}$ or not; if $v$ is a finite place corresponding to the prime ideal $\p$ in $K$, then we put $|\alpha|_v=N(\p)^{-\textrm{ord}_\p (\alpha)}$ for $\alpha\in K\setminus\{0\}$, where $N(\p)=|\co_K/\p|$ is the absolute norm of $\p$, and $\textrm{ord}_\p(\alpha)$ is the exponent of $\p$ in the prime ideal factorization of $(\alpha)$. We put $|0|_v=0$ and $\textrm{ord}_\p (0)=\infty$. Further, we denote by $d_v$ the local degree of $K$ at $v$, i.e. $d_v=[K_v:\mathbb{Q}_{v_0}]$, where $v_0$ is the place on $\Q$ lying below $v$.

The absolute logarithmic height $h(\alpha)$ of $\alpha\in K$ is defined as
$$h(\alpha)=\frac{1}{d}\sum_{v\in\cm_K}\log\max(1,|\alpha|_v).$$
It depends only on $\alpha$, and not on the choice of the number field $K$ containing $\alpha$. For properties of this height, see e.g. Evertse and Gy\H ory \cite{7}.

Let $S$ be a finite subset of $\cm_K$ containing the set $S_\infty$ of infinite places, let $\co_S$ be the ring of $S$-integers and $\co^\ast_S$ the group of $S$-units in $K$. The group $\co^\ast_S$ is of rank $s-1$ where $s=|S|$. Let $\{\varepsilon_1,\ldots,\varepsilon_{s-1} \}$ denote a fundamental system of $S$-units in $K$, and let $v_1,\ldots,v_{s-1}$ be a subset of $S$. Then $R_S$, the $S$-regulator of $K$, is the absolute value of the determinant of the matrix $(\log |\varepsilon_i|_{v_j})_{i,j=1,\ldots,s-1}$. It is a positive number which is independent of the choice of $\varepsilon_1,\ldots,\varepsilon_{s-1}$ and $v_1,\ldots,v_{s-1}$. As was mentioned above, for $S=S_\infty$ $R_S$ is just the regulator of $K$.

Again, denote by $\p_1,\ldots,\p_t$ the prime ideals in $K$ which correspond to the finite places in $S$.
\begin{lemma}\label{lem:1}
	If $t>0$, then
	$$R_K\prod_{i=1}^{t}\log N(\p_i)\le R_S\le R_Kh_K\prod_{i=1}^{t}\log N(\p_i).$$
\end{lemma}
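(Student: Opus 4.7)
The plan is to realise $R_S$ as the regulator of a fundamental system of $S$-units that differs by a controllable lattice index from a concrete system of generators amenable to block evaluation, and then to express that index via class-group data.

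First, since $h_K$ annihilates $\mathrm{Cl}(K)$, for each $i=1,\ldots,t$ I can pick $\pi_i\in\co_K\setminus\{0\}$ with $(\pi_i)=\p_i^{h_K}$; each $\pi_i$ lies in $\co_S^*$. Combined with a system $\eta_1,\ldots,\eta_r$ of fundamental units of $\co_K^*$ and the roots of unity, the elements $\eta_1,\ldots,\eta_r,\pi_1,\ldots,\pi_t$ span a subgroup $U\subseteq\co_S^*$ of rank $s-1=r+t$, of some finite index $I$ in $\co_S^*$ modulo torsion. Consequently the absolute value of the determinant of the matrix $M=(\log|\varepsilon_i|_{v_j})$ formed from these generators and from any choice of $s-1$ places from $S$ equals $I\cdot R_S$, independent of the particular fundamental system used in the definition of $R_S$.

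Next, I order the chosen $s-1$ places so that $v_1,\ldots,v_r$ are infinite and $v_{r+j}$ is the finite place attached to $\p_j$. Taking $\varepsilon_i=\eta_i$ for $i\le r$ and $\varepsilon_{r+j}=\pi_j$ for $1\le j\le t$, the matrix $M$ is block lower-triangular: its upper-right $r\times t$ block vanishes because units of $\co_K^*$ have trivial valuation at all finite places; its lower-right $t\times t$ block equals $-h_K\cdot\mathrm{diag}(\log N(\p_1),\ldots,\log N(\p_t))$, since $\mathrm{ord}_{\p_{j'}}(\pi_j)=h_K\delta_{jj'}$; and its upper-left $r\times r$ block has determinant of absolute value exactly $R_K$ by the definition of the ordinary regulator. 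Therefore
$$|\det M|=R_K\,h_K^{\,t}\prod_{i=1}^{t}\log N(\p_i).$$

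The only real obstacle is the explicit computation of $I$. I would handle this through the valuation map $\varphi\colon\co_S^*\to\Z^t,\ u\mapsto(\mathrm{ord}_{\p_i}(u))_{i=1}^t$, whose kernel is $\co_K^*$. Its image is precisely $\{(a_i)\in\Z^t:\prod_i\p_i^{a_i}\text{ is principal}\}$, so it has index in $\Z^t$ equal to the order $h^*$ of $\langle[\p_1],\ldots,[\p_t]\rangle\subseteq\mathrm{Cl}(K)$. Meanwhile $\varphi(U)=h_K\Z^t$ has index $h_K^{\,t}$ in $\Z^t$. Since $U\supseteq\co_K^*=\ker\varphi\cap\co_S^*$, a short index chase gives $I=[\co_S^*:U]=h_K^{\,t}/h^*$. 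Substituting into $|\det M|=I\cdot R_S$ yields $R_S=R_K\,h^*\prod_{i=1}^t\log N(\p_i)$, and both inequalities of the lemma follow at once from $1\le h^*\le h_K$.
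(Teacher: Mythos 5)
Your proof is correct. The paper itself gives no argument here --- it simply cites Lemma~3 of Bugeaud and Gy\H ory --- and your block-triangular determinant computation together with the index chase $[\co_S^\ast:U]=h_K^t/h^\ast$ is precisely the standard proof of that cited lemma: it yields the exact identity $R_S=h^\ast R_K\prod_{i=1}^t\log N(\p_i)$ with $h^\ast$ the order of the subgroup of the class group generated by the classes of $\p_1,\ldots,\p_t$, from which both inequalities are immediate.
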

\begin{proof}
	This is Lemma 3 in Bugeaud and Gy\H ory \cite{5}.
\end{proof}
\begin{lemma}\label{lem:2}
	There exists in $K$ a fundamental system $\{\varepsilon_1,\ldots,\varepsilon_{s-1} \}$ of $S$-units such that
	$$\prod_{i=1}^{s-1}h(\varepsilon_i)\le c_{10}R_S,$$
	where $c_{10}=((s-1)!)^2 / 2^{s-2}d^{s-1}$.
\end{lemma}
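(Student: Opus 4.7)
The plan is to translate the problem into the geometry of numbers via the logarithmic embedding. Enumerate $S=\{v_1,\ldots,v_s\}$ and set $\pi(\varepsilon)=(\log|\varepsilon|_{v_1},\ldots,\log|\varepsilon|_{v_{s-1}})\in\mathbb{R}^{s-1}$. By Dirichlet's $S$-unit theorem, $\ker\pi$ is the group of roots of unity in $K$ and $\Lambda:=\pi(\co^\ast_S)$ is a full lattice in $\mathbb{R}^{s-1}$; by the very definition of $R_S$ recalled in Section~\ref{sec:4}, its covolume equals $R_S$, and any $\Z$-basis of $\Lambda$ lifts, modulo roots of unity, to a fundamental system of $S$-units in $K$. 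Since $|\varepsilon|_v=1$ for $v\notin S$ and the product formula gives $\sum_{v\in S}\log|\varepsilon|_v=0$, a short computation shows that the height is controlled by the $\ell^1$-norm of $\pi$:
$$h(\varepsilon)=\frac{1}{d}\sum_{v\in S}\max(0,\log|\varepsilon|_v)=\frac{1}{2d}\sum_{v\in S}\bigl|\log|\varepsilon|_v\bigr|\le\frac{1}{d}\|\pi(\varepsilon)\|_1,$$
the last inequality using $\bigl|\log|\varepsilon|_{v_s}\bigr|\le\sum_{j<s}\bigl|\log|\varepsilon|_{v_j}\bigr|$.

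Now I would apply Minkowski's second theorem to $\Lambda$ with the symmetric convex body $B=\{x\in\mathbb{R}^{s-1}:\|x\|_1\le 1\}$, of volume $2^{s-1}/(s-1)!$. Writing $\lambda_1\le\cdots\le\lambda_{s-1}$ for the successive minima of $\Lambda$ with respect to $B$, this yields
$$\lambda_1\cdots\lambda_{s-1}\le\frac{2^{s-1}R_S}{\mathrm{vol}(B)}=(s-1)!\,R_S.$$
Next, invoke the classical Mahler-type basis lemma from the geometry of numbers (see e.g.\ Cassels, \emph{An Introduction to the Geometry of Numbers}, Ch.~VIII) producing a basis $b_1,\ldots,b_{s-1}$ of $\Lambda$ with $\|b_i\|_1\le\max(1,i/2)\,\lambda_i$. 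Since $\prod_{i=1}^{s-1}\max(1,i/2)=(s-1)!/2^{s-2}$, this gives
$$\prod_{i=1}^{s-1}\|b_i\|_1\le\frac{(s-1)!}{2^{s-2}}\prod_{i=1}^{s-1}\lambda_i\le\frac{((s-1)!)^2}{2^{s-2}}R_S.$$

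Finally, choosing $\varepsilon_i\in\co^\ast_S$ with $\pi(\varepsilon_i)=b_i$ produces the desired fundamental system of $S$-units, for which
$$\prod_{i=1}^{s-1}h(\varepsilon_i)\le\frac{1}{d^{s-1}}\prod_{i=1}^{s-1}\|b_i\|_1\le\frac{((s-1)!)^2}{2^{s-2}d^{s-1}}R_S=c_{10}R_S.$$
The one non-routine ingredient is the Mahler-style basis lemma that upgrades the successive-minima bound of Minkowski into a bound on the product of norms of an actual \emph{basis} of $\Lambda$; the rest is routine bookkeeping of the normalisations of $h$ and of the local absolute values.
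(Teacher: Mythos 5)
Your argument is correct. The paper itself gives no proof here, only the citation to Lemma~1 of Bugeaud and Gy\H ory \cite{5}, and your reconstruction --- the logarithmic embedding onto $s-1$ coordinates of $S$ (whose image is a lattice of covolume $R_S$), the bound $h(\varepsilon)\le\frac{1}{d}\|\pi(\varepsilon)\|_1$ via the product formula, Minkowski's second theorem for the cross-polytope, and the Mahler--Cassels passage from successive minima to a basis with the factor $\prod_i\max(1,i/2)=(s-1)!/2^{s-2}$ --- is essentially the proof given in that reference, and it reproduces the constant $c_{10}$ exactly.
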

\begin{proof}
	See Lemma 1 in Bugeaud and Gy\H ory \cite{5}.
\end{proof}

For $\alpha\in K\setminus\{0\}$, the fractional ideal $(\alpha)$ can be written uniquely as a product of two ideals $\mathfrak{a}_1,\mathfrak{a}_2$, where $\mathfrak{a}_1$ is composed of $\p_1,\ldots,\p_t$ and $\mathfrak{a}_2$ is relatively prime to $\p_1,\ldots,\p_t$. Then the $S$-norm of $\alpha$ is defined as $N_S(\alpha)=N(\mathfrak{a}_2)$. In other words, $N_S(\alpha)=\prod_{v\in S}|\alpha|_v$. Notice that the $S$-norm is multiplicative. Further, $\log N_S(\alpha)\le dh(\alpha)$. 

We put again
$$Q_S=N(\p_1\cdots\p_t)\ \textrm{if}\  t>0,\  Q_S=1\ \textrm{if}\  t=0.$$
\begin{lemma}\label{lem:3}
	For every $\alpha\in\co_S\setminus\{0 \}$ and for every integer $n\ge 1$ there exists $\varepsilon\in\co^\ast_S$ such that
	$$h(\varepsilon^n\alpha)\le\frac{1}{d}\log N_S(\alpha)+n(c_3R_K+\frac{h_K}{d}\log Q_S),$$
	where, as above $c_3=0$, $1$ or $29\e r!r\sqrt{r-1}\log^\ast d$, according as $r=0$, $1$ or $r\ge 2$.
\end{lemma}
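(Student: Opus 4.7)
The plan is to construct $\varepsilon$ in two independent steps, one handling the finite $S$-places (via the class group and $h_K$) and one handling the Archimedean places (via a fundamental system of units of $\co_K$). The key preliminary identity is
\[
d\,h(\varepsilon^n\alpha) = \log N_S(\alpha) + \sum_{v\in S}\log^+|\varepsilon^n\alpha|_v^{-1},
\]
which follows from $\varepsilon^n\alpha\in\co_S$ (places outside $S$ contribute nothing to $h$), from $N_S(\varepsilon^n\alpha)=N_S(\alpha)$ (as $\varepsilon\in\co^\ast_S$ has $S$-norm $1$), and from the elementary identity $\log^+ x=\log x+\log^+ x^{-1}$. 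Thus it suffices to pick $\varepsilon$ so that the error sum is at most $ndc_3R_K+nh_K\log Q_S$. I write $\varepsilon=\varepsilon_1 u$ with $\varepsilon_1\in\co^\ast_S$ and $u\in\co^\ast_K$; the two contributions decouple because $|u|_v=1$ at every finite $v$.

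For the finite places, factor $(\alpha)=\prod_{i=1}^{t}\p_i^{a_i}\mathfrak{a}_2$ with $\mathfrak{a}_2$ coprime to $\p_1,\ldots,\p_t$ and write $a_i=nq_i+r_i$ with $0\le r_i<n$. The class of $\prod\p_i^{q_i}$ in $\mathrm{Cl}(K)$ lies in the subgroup generated by $[\p_1],\ldots,[\p_t]$, whose order divides $h_K$, so integers $m_i$ with $0\le m_i<h_K$ can be found making $\prod\p_i^{q_i+m_i}$ principal. Choosing a generator $\gamma$ and setting $\varepsilon_1=\gamma^{-1}\in\co^\ast_S$ gives $\mathrm{ord}_{\p_i}(\varepsilon_1^n\alpha)=r_i-nm_i\in[-nh_K,n)$, whence
\[
\sum_{v\in S,\, v\text{ finite}}\log^+|\varepsilon_1^n\alpha|_v^{-1}\le nh_K\log Q_S.
\]

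For the Archimedean step set $\beta=\varepsilon_1^n\alpha$; the factor $u$ modifies only the Archimedean absolute values. Taking a fundamental system $\{\eta_1,\ldots,\eta_r\}$ of $\co^\ast_K$ with $\prod h(\eta_k)$ small relative to $R_K$ (the $S=S_\infty$ analogue of Lemma \ref{lem:2}), I would pick integers $c_k$ so that $u=\eta_1^{c_1}\cdots\eta_r^{c_r}$ makes $(n\sum_k c_k\log|\eta_k|_v+\log|\beta|_v)_{v\mid\infty}$ lie within $\ell^\infty$-distance $nc_3R_K$ of the constant vector whose entries equal the mean of $(\log|\beta|_v)_{v\mid\infty}$. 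Existence of such $c_k$ is a Minkowski-type covering-radius statement for the log-unit lattice scaled by $n$, and is the source of the explicit shape of $c_3$, in particular the factor $r!\,r\sqrt{r-1}\log^\ast d$ for $r\ge 2$. This yields $\sum_{v\mid\infty}\log^+|u^n\beta|_v^{-1}\le ndc_3R_K$, and combining with the finite-place bound proves the lemma. The main obstacle is this covering-radius estimate: recovering the precise dependence of $c_3$ on $r$ and $d$ needs a carefully chosen fundamental system of units, and the restriction to $n$-th powers of units (rather than the full unit group) is exactly what forces the factor $n$ in front of $R_K$.
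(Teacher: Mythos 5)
The paper offers no proof of this lemma; it simply cites Lemma 3 of Gy\H ory and Yu \cite{23}. Your strategy --- the identity $d\,h(\gamma)=\log N_S(\gamma)+\sum_{v\in S}\log^+|\gamma|_v^{-1}$ for $\gamma\in\co_S$, a class-group step at the finite places of $S$, and a unit-lattice covering step at the infinite places --- is exactly the standard route, and the finite-place step is correct; in fact your construction gives $\mathrm{ord}_{\p_i}(\varepsilon_1^n\alpha)=r_i-nm_i<n$, so that contribution is at most $(n-1)\log Q_S$, well under the stated budget $nh_K\log Q_S$. Deferring the covering-radius estimate to a carefully chosen fundamental system of units is also legitimate: the required input is essentially Lemma 1 of Bugeaud and Gy\H ory \cite{5} (the source of Lemma \ref{lem:2} here), which controls both $\prod_k h(\eta_k)$ and the inverse of the matrix $(\log|\eta_k|_{v_j})$, and that is indeed where the factor $29\e r!r\sqrt{r-1}\log^\ast d$ originates.

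The one genuine flaw is in the archimedean accounting. You conclude $\sum_{v\mid\infty}\log^+|u^n\beta|_v^{-1}\le ndc_3R_K$ from steering $(\log|u^n\beta|_v)_{v\mid\infty}$ to within $O(nR_K)$ of the constant vector whose entries are the mean $\mu$ of $(\log|\beta|_v)_{v\mid\infty}$; but that conclusion requires $\mu\ge 0$. For your $\beta=\varepsilon_1^n\alpha$ one has $\sum_{v\mid\infty}\log|\beta|_v=\log N_S(\alpha)+\sum_i(r_i-nm_i)\log N(\p_i)$, which can be as small as $-n(h_K-1)\log Q_S$ precisely because your choice $0\le m_i<h_K$ introduces negative valuations at the $\p_i$. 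The claimed bound is outright false when $r=0$ and $h_K>1$: then $c_3=0$, yet $|\beta|_v<1$ at the unique infinite place is possible. The repair is easy: either take $-h_K<m_i\le 0$, so that $\mathrm{ord}_{\p_i}(\varepsilon_1^n\alpha)=r_i-nm_i\in[0,nh_K)$, the finite places contribute at most $nh_K\log Q_S$ as you state, and the archimedean mean is $\ge\frac{1}{r+1}\log N_S(\alpha)\ge 0$ so both of your intermediate bounds hold; or keep your $m_i$ and carry the extra term $\max(0,-\sum_{v\mid\infty}\log|\beta|_v)\le n(h_K-1)\log Q_S$ through the archimedean sum, where it is absorbed by the slack between $(n-1)\log Q_S$ and $nh_K\log Q_S$ at the finite places. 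Either way the lemma follows, but as written your two intermediate estimates are not simultaneously delivered by the $\varepsilon_1$ you construct.
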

\begin{proof}
	See Lemma 3 in Gy\H ory and Yu \cite{23}.
\end{proof}

For $\gamma\in K\setminus\{ 0\}$ and $v\in\cm_K$, define $h_v(\gamma):=\log^+ (1/|\gamma|_v)$. To deal with equation \eqref{eq:1a} we consider $h_v(P)$ for
$$P\in A:=\{\alpha x,\beta y,\frac{\beta y}{\alpha x}\},$$
where $x,y$ is a solution of \eqref{eq:1a}. Denote by $S'$ the subset of $S$, deprived $S$ of its two prime ideals with largest norm. For $t\le 2$, let $S'=S_\infty$.

The following result is due to Le Fourn \cite{25}. It plays an important role in the application of his method to $S$-unit equations.
\begin{proposition}\label{prop:4}
	Let $x,y\in\co_S^\ast$ be a solution of equation \eqref{eq:1a}. Then for some $v\in S'$ and $P\in A$
	$$\frac{d_v}{d}h_v(P)\ge \frac{1}{|S|}(\max(h(x),h(y))-3H)$$
	holds.
\end{proposition}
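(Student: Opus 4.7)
My plan is to identify, for every solution $(x,y)$, a pair $(\nu, P) \in S' \times A$ whose contribution $(d_\nu/d)\, h_\nu(P)$ is at least the claimed fraction of $\max(h(x),h(y))$. Set $M = \max(h(x),h(y))$ and, by symmetry, assume $M = h(x)$. Writing $u = \alpha x$, $v = \beta y$, $w = \beta y/(\alpha x)$, I would first establish the preliminary height bounds $h(u) \ge M - H$, $h(v) \ge M - H - \log 2$, and $h(w) \ge M - H - \log 2$ from the subadditivity of $h$ and the identities $v = 1 - u$ and $1 + w = 1/u$. Decomposing $h(P) = \sum_\nu (d_\nu/d)\, h_\nu(P)$ and bounding the contribution from places outside $S$ by $h(\alpha) + h(\beta) + h(\beta/\alpha) \le 2H$ (case by case for $P$), I would then obtain
\[
\sum_{\nu \in S} \frac{d_\nu}{d}\, h_\nu(P) \,\ge\, M - 3H
\]
for each $P \in A$.

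Next I would exploit the local structure imposed by $u + v = 1$. At any finite place $\nu$, the constraint $|u+v|_\nu = 1$ pins the triple $(h_\nu(u), h_\nu(v), h_\nu(w))$ into one of three patterns: \emph{uv-heavy}, with $h_\nu(u) = h_\nu(v) > 0 = h_\nu(w)$, when $|u|_\nu = |v|_\nu > 1$; \emph{$w$-heavy}, with $h_\nu(w) > 0$ and $h_\nu(u) = h_\nu(v) = 0$, in the remaining non-silent case; or \emph{silent}, with all three entries zero. Archimedean places admit an analogous classification up to a bounded $O(1)$ slack.

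Let $\nu_1, \nu_2$ be the two largest-norm finite places of $S$ (those removed when passing to $S'$). If neither is uv-heavy, then $h_{\nu_i}(u) = 0$ at both, so $\sum_{\nu \in S'} (d_\nu/d)\, h_\nu(u) = \sum_{\nu \in S} (d_\nu/d)\, h_\nu(u) \ge M - 3H$; pigeonhole over the $|S'| \le |S|$ places of $S'$ then furnishes the required pair with $P = u$. Symmetrically, if neither is $w$-heavy, I would use $P = w$. The main obstacle is the remaining mixed case, where one of $\nu_1, \nu_2$ is uv-heavy and the other is $w$-heavy; here neither $P = u$ nor $P = w$ alone yields a sufficient $S'$-sum, because too much mass concentrates at $\nu_1$ or $\nu_2$. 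To handle this, I would invoke the product formula $\sum_\nu d_\nu \log|u|_\nu = 0$ (and its analogue for $w$): the large ultrametric contributions at $\nu_1, \nu_2$ must be balanced elsewhere, and since every other finite place in $S$ obeys the same trichotomy, the compensating mass is forced onto the archimedean places of $S_\infty \subseteq S'$, producing a $\nu \in S'$ and a $P \in A$ with a significant $(d_\nu/d)\, h_\nu(P)$. The $3H$ slack in the conclusion absorbs the various $O(H)$ and $\log 2$ error terms accumulated along the way.
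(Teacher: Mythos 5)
Your overall shape --- decompose $h(P)$ into local contributions, classify the places of $S$ according to which member of $A$ can be large there, and pigeonhole over $S'$ --- is indeed the skeleton of the argument the paper merely cites (Lemma 4.1 and Section 4 of Le Fourn \cite{25}). But two things go wrong. First, your local trichotomy is inconsistent with the definition in force: the paper sets $h_v(\gamma)=\log^+(1/|\gamma|_v)$, so $h_v(P)$ is large when $P$ is $v$-adically \emph{small}, whereas your ``uv-heavy'' pattern ($h_\nu(u)=h_\nu(v)>0$ when $|u|_\nu=|v|_\nu>1$) reads $h_\nu$ as $\log^+|\cdot|_\nu$. The global sums coincide under either convention by the product formula, but the local case analysis does not, and the inequality to be proved uses the paper's convention. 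Under that convention the classification at a finite place is: $|\alpha x|_v<1=|\beta y|_v$ (only $h_v(\alpha x)>0$); $|\beta y|_v<1=|\alpha x|_v$ (here $h_v(\beta y)$ \emph{and} $h_v(\beta y/\alpha x)$ are both positive, and equal); or $|\alpha x|_v=|\beta y|_v\ge 1$ (all three vanish --- in particular the places where $\alpha x$ is large, which is exactly where your version of $h_\nu(\alpha x)$ concentrates, contribute nothing to any $h_v(P)$, $P\in A$). Neither of the last two patterns appears in your list, so the case split that follows is not exhaustive for the quantity actually being estimated.

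Second, and decisively, the mixed case --- which you rightly identify as the crux --- is not proved. The claim that the product formula forces the compensating mass onto the archimedean places is unjustified: $\sum_\nu d_\nu\log|u|_\nu=0$ only says $u$ is small somewhere, and that somewhere can perfectly well be another finite place of $S$, including the second removed place. The mechanism one actually needs is combinatorial rather than analytic: each place of $S$ is $v$-adically close to at most one of the three points $0,1,\infty$ of $\mathbb{P}^1$ (exactly at finite places, up to a bounded defect at archimedean ones); each of the three corresponding proximity sums over $S$ is at least $M-3H$ after the $d_\nu/d$ normalization; and since the two removed finite places are each close to at most one of the three points, at least one proximity function vanishes at both of them, so its entire mass sits on $S'$ and pigeonhole finishes. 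Your write-up never establishes this ``at most one per place, hence at most two points covered by $\{\nu_1,\nu_2\}$'' count, and you also never connect the surviving proximity function back to a member of $A$ --- a nontrivial point, since proximity to $\infty$ corresponds to $h_v$ of $\beta y/\alpha x$ only after dividing the equation through by $\alpha x$ (the transformation the paper performs at the end of the proof of Theorem \ref{thm:1}). Until the mixed case is carried out with this bookkeeping, the proof is incomplete at precisely the step where discarding the two largest primes matters.
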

\begin{proof}
	See Lemma 4.1 and, with slightly different notations, the corresponding arguments of Section 4 in Le Fourn \cite{25}.
\end{proof}

For $v\in\cm_K$, we put $N(v):=2$ if $v$ is an infinite place, and $N(v):=N(\p)$ if $v=\p$ is a finite place, i.e. a prime ideal of $\co_K$.

Baker's theory of logarithmic forms will be used in our proofs through the following.
\begin{proposition}\label{prop:5}
	Let $\Gamma$ be a finitely generated multiplicative subgroup of $K^\ast$ of positive rank, with system of generators $\{\xi_1,\ldots,\xi_m \}$ for $\Gamma/\Gamma_{\textnormal{tors}}$. Let $\alpha\in K^\ast$, and put
	$$H:=\max(h(\alpha),1),\ \theta:=h(\xi_1)\cdots h(\xi_m).$$
	Further, let $v\in\cm_K$. Then for every $\xi\in\Gamma$ with $\alpha\xi\ne 1$ we have
	$$\log |1-\alpha\xi|_v>c_{11}\frac{N(v)}{\log N(v)}\theta H\log^\ast\left(\frac{N(v)h(\xi)}{H}\right),$$
	where $c_{11}=2\lambda(m+1)\log^\ast(dm)(\log^\ast d)^2(16\e d)^{3m+5}$ with $\lambda=12$ if $m=1$, $\lambda=1$ if $m\ge 2$.
\end{proposition}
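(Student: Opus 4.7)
The plan is to reduce the claim to a lower bound on a suitable linear form in logarithms and to apply the theorems of Matveev \cite{26} in the archimedean case and Yu \cite{31} in the $\mathfrak p$-adic case, then to repackage the output in the stated form.

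First I would write $\xi=\zeta\,\xi_1^{b_1}\cdots\xi_m^{b_m}$ with $\zeta\in\Gamma_{\textnormal{tors}}$ and $b_i\in\Z$. Since $\zeta$ is a root of unity, $h(\alpha\zeta)=h(\alpha)$, so after absorbing $\zeta$ into $\alpha$ we may assume $\zeta=1$. Choosing the $b_i$ to minimise $B:=\max_i|b_i|$, a standard lattice argument (passing to a maximal independent subset of $\{\xi_i\}$ and using its successive minima bounds) gives $B\le C_1(d,m)\,h(\xi)/\min_i h(\xi_i)$, which combined with Dobrowolski's lower bound on $\min_i h(\xi_i)$ lets $\log^\ast B$ be absorbed into $\log^\ast(N(v)h(\xi)/H)$ up to a multiplicative constant depending only on $d$ and $m$.

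Next I would pass from $|1-\alpha\xi|_v$ to the $(m+1)$-term linear form
$$\Lambda=b_1\log\xi_1+\cdots+b_m\log\xi_m+\log\alpha-2\pi i k,$$
where the logarithms are $\mathfrak p$-adic and the $-2\pi i k$ term is absent when $v=\mathfrak p$ is finite, while for archimedean $v$ the integer $k$ is chosen to minimise $|\Lambda|_v$. The elementary bound $|\log z|\le 2|1-z|$ for $z$ near $1$, respectively the identity $\mathrm{ord}_{\mathfrak p}(\log_{\mathfrak p}(1+u))=\mathrm{ord}_{\mathfrak p}(u)$ on the convergence domain of $\log_{\mathfrak p}$ (reached after multiplying $\alpha\xi$ by a suitable element to obtain a principal unit), converts a lower bound for $|\Lambda|_v$ into the desired lower bound for $|1-\alpha\xi|_v$, losing only factors polynomial in $N(v)$ and $B$.

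Applying Matveev in the archimedean case or Yu in the non-archimedean case to $\Lambda$, with parameters $A_i=\max(h(\xi_i),1/d)$ for $i\le m$ and $A_{m+1}=H$, then yields
$$\log|\Lambda|_v>-C(d,m)\,\frac{N(v)}{\log N(v)}\,\theta\,H\,\log^\ast\!\Bigl(\tfrac{N(v)h(\xi)}{H}\Bigr),$$
with an explicit $C(d,m)$ that, after substituting the bound on $B$, matches the stated $c_{11}$. The separate factor $\lambda=12$ when $m=1$ reflects that Matveev reduces in this case to a Baker-type one-logarithm estimate with a worse numerical constant. The main obstacle, in my view, is the bookkeeping needed to preserve the clean product $\theta=\prod h(\xi_i)$ (rather than $\prod\max(h(\xi_i),1/d)$) and to compress the various logarithmic contributions into the single $\log^\ast(N(v)h(\xi)/H)$ of the statement; the deep analytic content from \cite{26} and \cite{31} is used essentially off-the-shelf.
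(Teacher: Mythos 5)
Your outline is essentially the proof of the result the paper relies on: Proposition \ref{prop:5} is quoted from Theorem 4.2.1 of Evertse and Gy\H ory \cite{7}, and the paper's ``proof'' is just that citation, the cited argument being exactly the combination you describe of Matveev \cite{26}, Yu \cite{31} and a geometry-of-numbers bound on the exponent vector of $\xi$ with respect to the generators. (Note that the displayed inequality in the statement is missing a minus sign before $c_{11}$, which your final display correctly restores.)
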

\begin{proof}
	This is Theorem 4.2.1 in Evertse and Gy\H ory \cite{7}. Its proof is a combination of results of Matveev \cite{26} and Yu \cite{31} concerning logarithmic forms with some results, due to Evertse and Gy\H ory \cite{7}, from the geometry of numbers.
\end{proof}

\section{Proofs of the theorems}\label{sec:5}

We keep the notation of the preceding sections.
\begin{proof}[Proof of Theorem \ref{thm:1}]
	We combine Propositions \ref{prop:4} and \ref{prop:5}, and use Lemma \ref{lem:1}, \ref{lem:2}, \ref{lem:3} as well as several ideas from Gy\H ory \cite{13} and Gy\H ory and Yu \cite{23}.
	
	Let $x,y$ be a solution of the equation
	\begin{align}
	\alpha x + \beta y = 1\ \textrm{if}\  x,y\in\co^\ast_S,\tag{1.a}
	\end{align}
	where $\alpha,\beta\in K\setminus\{0\}$. Put
	$$\ch:=\max(h(x),h(y)).$$
	For $t\ge 3$, let $S'$ denote the subset of $S$, depriving $S$ of its two prime ideals with largest norm, and for $t\le 2$ let $S'=S_\infty$. Then, by Proposition \ref{prop:4},
	\begin{align}\label{eq:5.1}
	\frac{d_v}{d}h_v(P)\ge \frac{1}{|S|}(\ch-3H)
	\end{align}
	follows for some $v\in S'$ and some $P\in A=\{\alpha x,\beta y,\beta y/\alpha x\}$. We may assume that $\ch>3H$, since otherwise we are done. Thus we have $h_v(P)>0$.
	
	First suppose that $P=\alpha x$. Then \eqref{eq:5.1} implies that $0<h_v(P)=-\log |\alpha x|_v$, whence $|\alpha x|_v<1$. We infer from \eqref{eq:1a} that $|\beta y|_v\le 4$ or $|\beta y|_v=1$ according as $v$ is infinite or finite. Hence
	\begin{align}\label{eq:5.2}
	|1-(\beta y)^{h_K}|_v=|1-\beta y|_v&\cdot |1+(\beta y)+\cdots +(\beta y)^{h_K-1}|_v\le\\
	&\le c_{12}|1-\beta y|_v,\notag
	\end{align}
	where $c_{12}=4^{h_K}$ or $c_{12}=1$, according as $v$ is infinite or not. Then it follows from \eqref{eq:1a} and \eqref{eq:5.2} that
	\begin{align}\label{eq:5.3}
	h_v(P)=&-\log |\alpha x|_v=-\log |1-\beta y|_v\le -\log |1-(\beta y)^{h_K}|_v\\
	&+\log c_{12}.\notag
	\end{align}
	
	By means of Proposition \ref{prop:5} we shall now give an upper bound for the right hand side of \eqref{eq:5.3}. Since $y\in\co^\ast_S$, there are integers $u_1,\ldots,u_t$ such that the principal ideal $(y)$ can be written in the form $(y)=\p_1^{u_1}\cdots \p_t^{u_t}$. Applying Lemma \ref{lem:3} with $S=S_\infty$, it follows that there are integers $\pi_i$ in $K$ such that $(\pi_i)=\p_i^{h_K}$ and
	\begin{align}\label{eq:5.4}
	h(\pi_i)\le \frac{2}{d}\mathcal{R}\log N(\p_i),\  i=1,\ldots,t.
	\end{align}
	Further, by Lemma \ref{lem:2} there exists in $K$ a fundamental system $\{\varepsilon_1,\ldots,\varepsilon_{r} \}$ of units such that
	\begin{align}\label{eq:5.5}
	\prod_{j=1}^{r}h(\varepsilon_j)\le c_{13}R_K
	\end{align}
	with $c_{13}=d^r$. We have
	\begin{align}\label{eq:5.6}
	y^{h_K}=\zeta\varepsilon_1^{a_1}\cdots \varepsilon_{r}^{a_r}\pi_1^{u_1}\cdots\pi_t^{u_t}
	\end{align}
	with a root of unity $\zeta$ and with appropriate integers $a_1,\ldots,a_r$.
	
	First consider the case when $v$ is infinite. Denote by $\Gamma$ the  multiplicative subgroup of $K^\ast$ generated by $\varepsilon_1,\ldots,\varepsilon_{r},\pi_1,\ldots,\pi_t$ and the roots of unity in $K$. Then	
	 $\{\varepsilon_1,\ldots,\varepsilon_{r},\pi_1,\ldots,\pi_t \}$ is a system of generators for $\Gamma/\Gamma_{\textrm{tors}}$. Put
	$$\theta:=h(\varepsilon_1)\cdots h(\varepsilon_{r})h(\pi_1)\cdots h(\pi_t).$$
	We can now apply Proposition \ref{prop:5}. We suppose that $(\beta y)^{h_K}\ne 1$, since otherwise $h(y)=h(\beta)\le H$ and, from \eqref{eq:1a}, $h(x)\le 5$ would follow which proves \eqref{eq:2.4}. We have 
	$y^{h_K}\in\Gamma$. Let $\widetilde{H}:=\max(h(\beta^{h_K}),1)$. Then by Proposition \ref{prop:5} we have
	\begin{align}\label{eq:5.7}
	-\log |1-(\beta y)^{h_K}|_v<c_{14}\frac{N(v)}{\log N(v)}\theta\widetilde{H}\log^\ast\left(\frac{N(v)h(y^{h_K})}{\widetilde{H}}\right),
	\end{align}
	where $c_{14}=2s^2(16\e d)^{3(r+t)+6}$, $N(v)=2$, $\widetilde{H}\le h_K\cdot H$, and $h(y^{h_K})\le h_K\ch$. Now, if $\ch>2sh_K^2 H$, it follows from \eqref{eq:5.1}, \eqref{eq:5.3} and \eqref{eq:5.7} that
	$$\ch<c_{15}\theta\widetilde{H}\log^\ast\left(\frac{2h_K\ch}{\widetilde{H}}\right),$$
	where $c_{15}=\frac{4}{\log 2}c_{14}$. This implies that
	\begin{align}\label{eq:5.8}
	\ch<c_{16}\mathcal{R}^2\theta(\log^\ast\theta)H,
	\end{align}
	using $h_K\le \mathcal{R}$. Here $c_{16}=2c_{15}\log(2c_{15})$.
	
	In view of \eqref{eq:5.4} and \eqref{eq:5.5} we get
	\begin{align}\label{eq:5.9}
	\theta\le c_{17}\mathcal{R}^{t+1}\prod_{i=1}^{t}\log N(\p_i),
	\end{align}
	where $c_{17}=2^td^{r-t}$. This gives
	\begin{align}\label{eq:5.10}
	\log^\ast\theta\le c_{18}(\log^\ast \mathcal{R})\log^\ast\log P_S,
	\end{align}
	where $c_{18}=3ds$ and $P_S$ denotes the maximum of the norms $N(\p_i)$, $i=1,\ldots,t$. Using the fact that $1/R_K<5$ (cf. Friedman \cite{10}), we deduce from Lemma \ref{lem:1} that
	\begin{align}\label{eq:5.11}
	\prod_{i=1}^{t}\log N(\p_i)\le 5\mathcal{R}_S.
	\end{align}
	Finally, we have
	\begin{align}\label{eq:5.12}
	\log^\ast\log P_S\le \frac{P'_S}{\log^\ast P'_S}\left(1+\frac{\log^\ast\log P_S}{\log^\ast P'_S}\right).
	\end{align}
	Now \eqref{eq:5.8}, \eqref{eq:5.9}, \eqref{eq:5.10}, \eqref{eq:5.11} and \eqref{eq:5.12} give
	$$\ch<c_{19}R^{t+4}\frac{P'_S}{\log^\ast P'_S}\left(1+\frac{\log^\ast\log P_S}{\log^\ast P'_S}\right)R_SH,$$
	where $c_{19}=3c_{16}\cdot c_{17}\cdot c_{18}$. After some computations we obtain \eqref{eq:2.4}.
	
	Next consider the case when $v$ is finite. To derive better bound for $\ch$, we make the following modification in the above arguments. Suppose that $v$ corresponds to the prime ideal $\p_t=\p$. Now we have $|\beta y|_v=1$, whence $\textnormal{ord}_\p(\beta^{h_K}y^{h_K})=0$. Putting
	$$\beta':=\beta^{h_K}\pi_t^{u_t},\  y':=y^{h_K}/\pi_t^{u_t},$$
	$\beta'y'=\beta^{h_K}y^{h_K}$ holds. But, by \eqref{eq:5.6}, $\textnormal{ord}_\p(y')=0$, hence $\textnormal{ord}_\p(\beta')=0$. This yields
	$$h_K \textnormal{ord}_\p(\beta)+u_t \textnormal{ord}_\p(\pi_t)=0$$
	which gives $|u_t|\le h_K|\textnormal{ord}_\p(\beta)|$. Further,
	$$|\textnormal{ord}_\p(\beta)|\le \frac{d}{\log N(\p)}h(\beta)$$
	(see e.g. Yu \cite{30}, p. 124). Thus, together with \eqref{eq:5.4} and $h_K\le \mathcal{R}$, this implies that
	
	\begin{align}\label{eq:5.13}
	h(\beta')\le h_Kh(\beta)+|u_t|h(\pi_t)\le \mathcal{R}^2H=:H'.
	\end{align}
	
	Let $\Gamma'$ denote the multiplicative subgroup of $K^\ast$ generated by $\varepsilon_1,\ldots,\varepsilon_{r},\pi_1,\ldots,\pi_{t-1}$ and the roots of unity in $K$. In view of \eqref{eq:5.6} we have $y'\in\Gamma'$. Put now
	$$\theta':=h(\varepsilon_1)\cdots h(\varepsilon_{r})h(\pi_1)\cdots h(\pi_{t-1}).$$
	Using again Proposition \ref{prop:5}, we infer that
	\begin{align}\label{eq:5.14}
	-\log|1-(\beta y)^{h_K}|_v=-\log |1-\beta'y'|\le\\
	c_{20}\frac{N(v)}{\log N(v)}\theta'H'\log^\ast\left(\frac{N(v)h(y')}{H'}\right),\notag
	\end{align}
	where $c_{20}=2s^2(16\e d)^{3(r+t)+3}$. Here we have
	$$h(y')=h(y^{h_K}/\pi_t^{u_t})\le \mathcal{R}^2(\ch+H).$$
	Hence it follows that
	\begin{align}\label{eq:5.15}
	\frac{N(v)h(y')}{H'}\le \mathcal{R}^2N(v)\frac{\ch+H}{H'}.
	\end{align}
	Now, as in the infinite case, we deduce from \eqref{eq:5.1}, \eqref{eq:5.3} and \eqref{eq:5.14} that
	$$\ch+H\le2sc_{21}\frac{N(v)}{\log N(v)}\theta'H'\log^\ast\left(\frac{2\mathcal{R}^2N(v)(\ch+H)}{H'}\right),$$
	whence
	\begin{align}\label{eq:5.16}
	\ch\le c_{21}(\log^\ast \mathcal{R})\frac{N(v)}{\log N(v)}\theta'H'\log^\ast(N(v)\theta'),
	\end{align}
	where $c_{21}=2sc_{20}\log(4sc_{20})$.
	
	We now estimate from above the parameters occurring in \eqref{eq:5.16}. It follows from \eqref{eq:5.4}, \eqref{eq:5.5} and \eqref{eq:5.11} that
	\begin{align}\label{eq:5.17}
	\theta'\le c_{17}R^t\mathcal{R}_S/\log N(\p)
	\end{align}
	Similarly to \eqref{eq:5.10}, we have
	\begin{align}\label{eq:5.18}
	\log^\ast\theta'\le 2c_{18}(\log^\ast \mathcal{R})\log^\ast\log P_S
	\end{align}
	with the above $c_{18}$. Using \eqref{eq:5.13}, \eqref{eq:5.17}, \eqref{eq:5.18},
	$N(\p)=N(v)$, $N(v)\le P'_S$ and $N(v)/(\log N(v))^2\le P'_S/(\log P'_S)^2$, we infer from \eqref{eq:5.16} that
	$$\ch<c_{22}\mathcal{R}^{t+4}\frac{P'_S}{\log P'_S}\left(1+\frac{\log^\ast\log P_S}{\log^\ast P'_S}\right)R_SH,$$
	where $c_{22}=3c_{21}\cdot c_{17}\cdot c_{18}$. Now as in the infinite case we get \eqref{eq:2.4} after some computations.
	
	It remains the case when in \eqref{eq:5.1} $P=\beta y$ or $\beta y/\alpha x$. In the first case \eqref{eq:2.4} immediately follows by symmetry. In the second case observe that $x'=1/x$, $y'=y/x$ is a solution of the $S$-unit equation
	$$\alpha' x'+\beta' y'=1\ \textrm{in}\  x',y'\in\co^\ast_S,$$
	where $\alpha'=1/\alpha$, $\beta'=-\beta/\alpha$ and $\beta y/\alpha x=-\beta'y'$. Then the above arguments apply to this equation with $P=-\beta' y'$ and give the same upper bound \eqref{eq:2.4} for the heights of $x',y'$ with $2H$ instead of $H$. Finally, the upper bound in \eqref{eq:2.4} follows for $\max(h(x),h(y))$ with an extra factor $2$.
\end{proof}
\begin{proof}[Proof of Theorem \ref{thm:2}]
	We follow the main steps of the proof of Theorem \ref{thm:1} in simplified form, adapting them to equation \eqref{eq:1b}. The case $m=1$ being trivial, we assume that $m\ge 2$.
	
	Let $x,y$ be a solution of equation \eqref{eq:1b}. Then $x,y$ satisfy \eqref{eq:1a} where now $S$ is the smallest subset of places of $K$ which contains $S_\infty$, so that $\Gamma\subseteq\co^\ast_S$. As above, let $t$ denote the number of finite places in $S$, and let $\ch:=\max(h(x),h(y))$. 
	
	For $t\ge 3$, let again $S'$ denote the subset of $S$ depriving $S$ of its two prime ideals with largest norm, and for $t\le 2$ let $S'=S_\infty$. Then, by Proposition \ref{prop:4}, \eqref{eq:5.1} follows for some $v\in S'$ and some $P\in A=\{\alpha x,\beta y,\beta y/\alpha x \}$. We may assume again that $\ch > 3H$, when $h_v(P)>0$.
	
	First consider the case $P=\alpha x$. Then
	\begin{align}\label{eq:5.3'}
	h_v(P)=-\log |\alpha x|_v=-\log |1-\beta y|_v.\tag{5.3'}
	\end{align}
	Applying Proposition \ref{prop:5}, we obtain
	\begin{align}\label{eq:5.4'}
-\log |1-\beta y|_v\le c_{11}\frac{N(v)}{\log N(v)}\theta H\log^\ast\left(\frac{N(v)h(y)}{H}\right)\tag{5.4'}
	\end{align}
	with $c_{11}$ occurring in Proposition \ref{prop:5}. We recall that $N(v)=2$ if $v$ is infinite, and $N(v)=N(\p)$ if $v$ is finite, where $\p$ is the prime ideal corresponding to $v$. We have in both cases $N(v)\le 2P'_S$ and $N(v)/\log N(v)\le 2P'_S/\log 2P'_S$. Now it follows from \eqref{eq:5.1}, \eqref{eq:5.3'}, \eqref{eq:5.4'} and $h(y)\le\ch$ that
	$$\ch < 2c_{11} s\frac{2P'_S}{\log (2P'_S)}\theta H\log^\ast\left(\frac{2P'_S\ch}{H}\right).$$
	Finally, this gives
	$$\ch < 8c_{11} s\frac{P'_S}{\log P'_S}\theta\max(\log(c_{11} sP'_S),\log^\ast \theta)H,$$
	which proves \eqref{eq:2.6}. For $P=\beta y$ or $\beta y/\alpha x$, we can argue in the same way as in the proof of Theorem \ref{thm:1}, and \eqref{eq:2.6} follows again.
\end{proof}
\begin{proof}[Proof of Theorem \ref{thm:3} (sketch)]
	We follow the proofs of Theorem 1 of Gy\H ory \cite{18a} and Theorem 9.6.3 of Evertse and Gy\H ory \cite{7}. The latter theorem is a less explicit version of Theorem 2 of Gy\H ory and Yu \cite{23}. We shall detail only those steps from Gy\H ory \cite{18a} or Evertse and Gy\H ory \cite{7} whose arguments differ from the earlier ones and depend on the application of our Theorem \ref{thm:1}.
	
	We shall denote by $c_{23},c_{24},\ldots,c_{37}$ effectively computable positive numbers which depend at most on $d,r,h_K,R_K,m,n,h(\delta)$ and $H$. These numbers can be made explicit by using the explicit form of Theorem \ref{thm:2}.
	
	As is pointed out in Gy\H ory \cite{18a} and Evertse and Gy\H ory \cite{7}, equation \eqref{eq:2a} can be written in the form \begin{align}\label{eq:5.19}
	\ell_1(\x)\cdots\ell_n(\x)=\delta\ \textrm{in}\ \x\in\co^m_S\ \textrm{with}\ \ell(\x)\ne 0\ \textrm{for}\ \ell\in\mathcal{L}
	\end{align}
	where, up to a proportional factor, $\ell_1,\ldots,\ell_n$ is a factorization of $F$ into linear forms in $X_1,\ldots,X_m$ with coefficients in $\co_K$, the heights of the coefficients of $\ell_1,\ldots,\ell_n$ do not exceed $c_{23}$, and the new $\delta\in\co_S\setminus\{0\}$ has height $h(\delta)\le c_{24}\log Q_S$.
	
	Let now $\x=(x_1,\ldots,x_m)\in\co^m_S$ be a solution of equation \eqref{eq:5.19} with $\ell(\x)\ne 0$ for $\ell\in\mathcal{L}$ if $k>1$, and write
	\begin{align}\label{eq:5.20}
	\ell_i(\x)=\delta_i,\ i=1,\ldots,n.
	\end{align}
	Then $\delta_i$ is a divisor of $\delta$ in $\co_S$, and $\log N_S(\delta_i)\le\log N_S(\delta)\le c_{25}h(\delta)\le c_{26}$ follows. By Lemma \ref{lem:3} there is an $\varepsilon_i\in\co^\ast_S$ such that
	\begin{align}\label{eq:5.21}
	h(\delta_i/\varepsilon_i)\le c_{27}\log Q_S,\ i=1,\ldots,n.
	\end{align}
	
	Let $\mathcal{L}_0$ be a maximal subset of pairwise linearly independent linear forms in the set of new linear forms $\ell_1,\ldots,\ell_n$. Then the new $\mathcal{L}_0$ and its associated graph $\mathcal{G}(\mathcal{L}_0)$ also satisfy the assumptions (i) and (ii) of the theorem. Let $\mathcal{L}_{0_1},\ldots,\mathcal{L}_{0_k}$ denote the vertex sets of the connected components of $\mathcal{G}(\mathcal{L}_0)$. First we assume that $k=1$. If $\{\ell_i,\ell_j\}$ is an edge of $\mathcal{G}(\mathcal{L}_0)$, then $\lambda_i\ell_i+\lambda_j\ell_j+\lambda\ell=0$ for some $\ell\in\mathcal{L}_0$ and some non-zero $\lambda_i,\lambda_j,\lambda$ in $K$ with heights not exceeding $c_{28}$. Together with \eqref{eq:5.21} this leads to an $S$-unit equation
	\begin{align}\label{eq:5.22}
	\tau_i\varepsilon_i+\tau_j\varepsilon_j+\tau\varepsilon=0\ \textrm{in}\ \varepsilon_i,\varepsilon_j,\varepsilon\in\co^\ast_S,
	\end{align}
	where $\tau_i,\tau_j,\tau$ are non-zero elements of $K$ with heights $\le c_{29}\log Q_S$. We apply now Theorem \ref{thm:1} to equation \eqref{eq:5.22} and we infer that
	$$\max(h(\varepsilon_i/\varepsilon),h(\varepsilon_j/\varepsilon))\le c_{30}^s\frac{P'_S}{\log^\ast P'_S}\left(1+\frac{\log^\ast\log P_S}{\log^\ast P'_S}\right)(\log Q_S)R_S=:A,$$
	and so, by \eqref{eq:5.21}
	\begin{align}\label{eq:5.23}
	\max(h(\delta_i/\varepsilon)),h(\delta_j/\varepsilon))\le c_{31}A.
	\end{align}
	If now $\{\ell_j,\ell_q\}$ is an edge in $\mathcal{G}(\mathcal{L}_0)$ then we deduce in the same way that there is an $\varepsilon'\in\co^\ast_S$ such that
	$$\max(h(\delta_j/\varepsilon'),h(\delta_q/\varepsilon'))\le c_{31}A.$$
	Together with \eqref{eq:5.23} this implies $h(\varepsilon'/\varepsilon)\le 2c_{31}A$, whence $h(\delta_q/\varepsilon)\le 3c_{31}A$. Using the assumption that $\mathcal{G}(\mathcal{L}_0)$ is connected and repeating the above procedure with the shortest path connecting two vertices, we infer that $h(\delta_i/\varepsilon)\le c_{32}A$ for each $i$ with $\ell_i\in\mathcal{L}_0$. But then it follows from \eqref{eq:5.19} that $h(\delta/\varepsilon^n)\le c_{33}A$. Hence $h(\varepsilon)\le c_{34}A$, and so $h(\delta_i)\le c_{35}A$ for $i=1,\ldots,n$. Regarding \eqref{eq:5.20} as a system of linear equations in $\x=(x_1,\ldots,x_m)$ and using the assumption (i), we deduce that
	\begin{align}\label{eq:5.24}
	h(x_i)\le c_{36}A\ \textrm{for}\ i=1,\ldots,n.
	\end{align}
	
	Next condsider the case $k>1$ when, by assumption (ii), the graph $\ch(\mathcal{L}_{0_1},\ldots,\mathcal{L}_{0_k})$ is connected. Then, repeating the arguments of Gy\H ory \cite{18a} or Evertse and Gy\H ory \cite{7}, we can infer as in the case $k=1$ that \eqref{eq:5.24} holds with a $c_{37}$ in place of $c_{36}$ for $i=1,\ldots,n$, whence \eqref{eq:3.2} follows.
 \end{proof}

\section*{\textbf{Acknowledgements}}
I thank Samuel Le Fourn for sending me his paper Le Fourn \cite{25} prior to its publication, and for his comments/remarks concerning our present paper.

\end{document}